\DeclareMathOperator{\Eu}{Eu}
\DeclareMathOperator{\Vol}{Vol}
\DeclareMathOperator{\M}{M}
\DeclareMathOperator{\MP}{MP}
\DeclareMathOperator{\codim}{codim}
\DeclareMathOperator{\Grass}{Grass}
\DeclareMathOperator{\Conv}{Conv}
\theoremstyle{plain}
 \newtheorem{thm}{Theorem}
 \newtheorem{cor}[thm]{Corollary}
 \theoremstyle{definition}
\def\mylistparam
\begin{document}

\author[R. Piene]{Ragni Piene}
\address{Department of Mathematics\\
University of Oslo\\ 
PO Box 1053, Blindern\\
NO-0316 Oslo, Norway}
\email{ragnip@math.uio.no}

\title{Chern--Mather classes of toric varieties}

\begin{abstract}
The purpose of this short note is to prove 
a formula
for the Chern--Mather classes of a toric variety in terms of its orbits and the local Euler obstructions at general points of each orbit (Theorem 2).  We use the general definition of the Chern--Schwartz--MacPherson classes (see \cite{MR0361141}) and their special expression in case of a toric variety (see \cite{MR1197235}). As a corollary, we obtain  a formula by Matsui--Takeuchi \cite[Corollary 1.6]{MR2737807}. Alternatively, one could deduce the formula of Theorem \ref{CM} from the Matsui--Takeuchi formula, by using our general result \cite[Th\'eor\`eme 3]{MR1074588} for the degree of the polar varieties in terms of the Chern--Mather classes.
\end{abstract}

\maketitle

We first recall the definition of the Chern--Mather class $c^{\M}(X)$ of an $n$-dimensional variety $X$. Let $\widetilde X\subseteq  \Grass_n (\Omega_X^1)$ denote the Nash transform of $X$, i.e., $\widetilde X$ is the closure of the graph of the rational section of $\Grass_n (\Omega_X^1)$ given by the locally free rank $n$ sheaf $\Omega_X^1|_{X_{\rm sm}}$. 
We set $c^{\M}(X):=\nu_*(c(\Omega^\vee)\cap [X])$, where $\Omega$ is the tautological sheaf on $\Grass_n (\Omega_X^1)$ and 
$\nu\colon \widetilde X\to X$. 

 The \emph{polar loci} of an $n$-dimensional projective variety $X\subset \mathbb P^N$ are defined as follows: Let $L_k\subset \mathbb P^N$ be a linear subspace of codimension $n-k+2$. The polar locus of $X$ with respect to $L_k$ is
 \[M_k:=\overline{\{x\in X_{\rm sm}\,|\, \dim (T_{X,x}\cap L_k)\ge k-1\}},\]
 where $T_{X,x}$ denotes the projective tangent space to $X$ at the (smooth) point $x$. (For other interpretations of $M_k$, see e.g. \cite{MR1074588}.) The rational equivalence classes $[M_k]$ are independent of $L_k$, for general $L_k$, and (in 1978) we showed the following:
 
 \begin{thm}\label{CP}\cite[Th\'eor\` eme 3]{MR1074588}
 The polar classes of $X$ are given by
  \begin{equation}\label{CP1}
 [M_k]=\sum_{i=0}^k (-1)^{i}\binom{n-i+1}{n-k+1}h^{k-i}\cap c^M_{i}(X),\end{equation}
 and, reciprocally, the Chern--Mather classes of $X$ are given by
 \begin{equation}\label{CP2}
 c^M_k(X)=\sum_{i=0}^k (-1)^{i}\binom{n-i+1}{n-k+1}h^{k-i}\cap [M_{i}],\end{equation}
 where $h$ is the class of a hyperplane.
 \end{thm}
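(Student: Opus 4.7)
\emph{Proof proposal.}
The plan is to identify $M_k$ with the $\nu$-image of a Thom--Porteous degeneracy locus on the Nash transform and then expand the resulting Chern class via a tautological sequence, with \eqref{CP2} following by formal inversion.

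First I would set up the relevant bundles on $\tilde X$. Writing $V = H^0(\mathbb P^N, \mathcal O(1))^\vee$, the affine cones $\hat T_{X,x}\subset V$ on the projective tangent spaces glue over $X_{\rm sm}$ to a rank $n+1$ subbundle of $V\otimes\mathcal O_X$, extending canonically to a rank $n+1$ subbundle $\tilde{\mathcal T}\subset V\otimes\mathcal O_{\tilde X}$ on the Nash blowup and fitting into the tautological short exact sequence
\[
0\to\nu^*\mathcal O_X(-1)\to\tilde{\mathcal T}\to\Omega^\vee\otimes\nu^*\mathcal O_X(-1)\to 0,
\]
so $c(\tilde{\mathcal T}) = (1-h)\,c(\Omega^\vee(-h))$ with $h:=c_1(\nu^*\mathcal O_X(1))$. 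A general linear subspace $L_k$ of codimension $n-k+2$ corresponds to a trivial rank $n-k+2$ quotient $V\otimes\mathcal O_{\tilde X}\twoheadrightarrow Q$, and at smooth points the condition $\dim(T_{X,x}\cap L_k)\ge k-1$ is exactly the rank drop condition that the composite $\varphi\colon\tilde{\mathcal T}\hookrightarrow V\otimes\mathcal O_{\tilde X}\twoheadrightarrow Q$ have rank $\le n-k+1$. For general $L_k$ this degeneracy locus has the expected codimension $k$, $\nu$ restricts to a birational map from it onto $M_k$, and the Thom--Porteous transversality hypothesis is met.

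The main computation applies Thom--Porteous with $E=\tilde{\mathcal T}$ and $F=Q$. Because of the rank gap $f-r=1$, the $k\times k$ Jacobi--Trudi determinant in the Segre classes of $\tilde{\mathcal T}$ collapses, via the identity $\det(s_{1+j-i}(E))=(-1)^k c_k(E)$ (the Schur function $s_{(1^k)}=e_k$), to a single Chern class. Expanding $c_k(\tilde{\mathcal T})$ using the sequence above, the twist formula $c_i(E\otimes L)=\sum_j\binom{\mathrm{rk}\,E-j}{i-j}c_j(E)\,c_1(L)^{i-j}$, and Pascal's identity to absorb the $(1-h)$ factor yields
\[
(-1)^k c_k(\tilde{\mathcal T})=\sum_{i=0}^k(-1)^i\binom{n-i+1}{k-i}h^{k-i}c_i(\Omega^\vee),
\]
which, after pushing forward by $\nu_*$, using $\nu_*(c_i(\Omega^\vee)\cap[\tilde X])=c^M_i(X)$, and the binomial symmetry $\binom{n-i+1}{k-i}=\binom{n-i+1}{n-k+1}$, gives \eqref{CP1}.

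Finally, \eqref{CP2} follows formally from \eqref{CP1}: the identity $\binom{c}{a}\binom{a}{b}=\binom{c}{b}\binom{c-b}{a-b}$ reduces the self-inverse property of the coefficient matrix to the vanishing of $\sum_m(-1)^m\binom{N}{m}$ for $N>0$, so substituting \eqref{CP1} into itself returns $c^M_k(X)$. The main obstacle is the bookkeeping in the Thom--Porteous step: the two sign contributions (the $(-1)^k$ from the Schur function and the $(-h)^{k-i}$ from the twists) must conspire with Pascal's identity so that the binomial coefficients $\binom{n-i+1}{n-k+1}$ emerge with exactly the signs stated.
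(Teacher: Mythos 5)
This theorem is quoted in the paper from \cite{MR1074588} without proof, so the comparison is with the argument in that reference (and in \cite{MR510551}), and your proposal reconstructs essentially that original argument: realize $M_k$ as a Thom--Porteous degeneracy locus for $\tilde{\mathcal T}\to Q$ on the Nash transform, collapse the $k\times k$ determinant to $(-1)^k c_k(\tilde{\mathcal T})$ using the rank gap $f-r=1$, expand via the Euler-type sequence and the twist formula, and invert formally. The computation checks out (including the involution property of the coefficient matrix), and the genericity/birationality assertions you leave as standard are exactly the Kleiman-transversality points established in \cite{MR510551}.
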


Recall (see \cite{MR0361141}) that the Chern--Schwartz--MacPherson  class of  $X$ is defined by 
\[c^{\MP}(X):=c^{\M}\circ T^{-1}(\mathbf 1_X).\]
Here we define $c^{\M} \colon Z(X)\to A(X)$ by $c^{\M}(\sum n_i V_i)=\sum n_i c^{\M}(V_i)$, and
 $T$ is the isomorphism from the group of cycles $Z(X)$ to the group of constructible functions on $X$, given by 
\[T(V)(x)= \Eu_{V}(x),\]
where $\Eu_V$ denotes the the constructible function whose value at a point $x\in X$ is equal to the local Euler obstruction of $V$ at $x$ (hence is 0 if $x\notin V$). Note that $\Eu_V(x)=1$ if $x\in V$ is a smooth point, but that the converse is false (this was first observed in  \cite[Example, pp. 28--29]{MR1074588}).
The Chern--Schwartz--MacPherson classes are invariant under homeomorphisms, whereas the Chern--Mather classes are invariant under generic linear projections (since the polar varieties are \cite[Thm. 4.1, p.~269]{MR510551}.

\medskip

In what follows we shall consider \emph{toric} varieties, defined as follows.
Let $A\subset \mathbb Z^n$ be a set of $N+1$ points such that the polytope $P:=\Conv (A)\subseteq \mathbb R^n$ has dimension $n$.
Let $X:=X_A\subset \mathbb P^N$ denote the corresponding toric variety. Let $\{X_\alpha\}_\alpha$ denote the orbits of the torus action on $X$. The classes $[\overline X_\alpha]$ generate the Chow ring $A(X)$ \cite[5.1, Prop., p.~96]{MR1234037}. Moreover,  $\overline X_\alpha$ is the toric variety corresponding to the lattice points $A_\alpha:=A\cap F_\alpha$, where $F_\alpha$ is the face of $P$ corresponding to the orbit $X_\alpha$. It was shown in \cite[Th\'eor\`eme]{MR1197235} that the Chern--Schwartz--MacPherson class of $X$ is given by
\begin{equation}\label{1}
c^{\MP}(X)=\sum_\alpha [\overline X_\alpha].
\end{equation}

The purpose of this note is to prove Theorem \ref{CM} below, using (\ref{1}).  As a corollary we obtain formulas for the ranks (degrees of the polar varieties) of $X$, in particular  the formula of  \cite[Corollary 1.6]{MR2737807}, hence we have an alternative proof of this result. Observe that if we instead \emph{assume}  \cite[Corollary 1.6]{MR2737807}, then we can deduce Theorem \ref{CM} by using \cite[Th\'eor\`eme 3]{MR1074588}.

\begin{thm}\label{CM}
The Chern--Mather class of the toric variety $X$ is equal to \[c^{\M}(X)=\sum_\alpha \Eu_X(X_\alpha)[\overline X_\alpha],\] where the sum is taken over all orbits $X_\alpha$ of the torus action on $X$, and where  $\Eu_X(X_\alpha)$ denotes the value of the local Euler obstruction of $X$ at a point in the orbit $X_\alpha$.
\end{thm}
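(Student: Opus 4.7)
The plan is to apply the MacPherson natural transformation $c^{\MP}=c^{\M}\circ T^{-1}$ to the constructible function $\Eu_X$, using formula~(\ref{1}) applied to each torus-invariant subvariety $\overline X_\alpha$. The first observation is that, since the algebraic torus acts transitively on each orbit $X_\alpha$ by automorphisms of $X$ and the local Euler obstruction is a local invariant, $\Eu_X$ is constant on every orbit. Writing $e_\alpha:=\Eu_X(X_\alpha)$ gives $\Eu_X=\sum_\alpha e_\alpha\mathbf{1}_{X_\alpha}$. Partially ordering the orbits by $\beta\le\alpha\iff X_\beta\subseteq\overline X_\alpha$ and M\"obius-inverting the triangular relation $\mathbf{1}_{\overline X_\alpha}=\sum_{\beta\le\alpha}\mathbf{1}_{X_\beta}$ produces unique integers $n_\alpha$ satisfying
\[
\Eu_X=\sum_\alpha n_\alpha\,\mathbf{1}_{\overline X_\alpha}, \qquad \sum_{\alpha\ge\beta} n_\alpha = e_\beta \text{ for every orbit }\beta.
\]

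I then apply $c^{\MP}$, extended linearly to constructible functions, to this identity. On the left, $c^{\MP}(\Eu_X)=c^{\M}(T^{-1}(\Eu_X))=c^{\M}([X])=c^{\M}(X)$. On the right, each $\overline X_\alpha$ is itself the toric variety $X_{A_\alpha}$ recorded in the excerpt, whose torus orbits are exactly the $X_\beta$ with $\beta\le\alpha$; hence formula~(\ref{1}) applied inside $\overline X_\alpha$ and pushed forward to $X$ gives $c^{\MP}(\mathbf{1}_{\overline X_\alpha})=\sum_{\beta\le\alpha}[\overline X_\beta]$ in $A(X)$. Substituting and interchanging the order of summation,
\[
c^{\M}(X)=\sum_\alpha n_\alpha\sum_{\beta\le\alpha}[\overline X_\beta] = \sum_\beta \Bigl(\sum_{\alpha\ge\beta} n_\alpha\Bigr)[\overline X_\beta] = \sum_\beta e_\beta\,[\overline X_\beta],
\]
which is the claimed formula.

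The essential ingredients are the torus-invariance of $\Eu_X$, the availability of formula~(\ref{1}) not just for $X$ but for every $\overline X_\alpha$, and a purely formal M\"obius inversion on the orbit poset. The only step requiring a moment of care is justifying the use of (\ref{1}) on each $\overline X_\alpha$; this follows at once from the identification $\overline X_\alpha = X_{A_\alpha}$ recalled in the excerpt. Notably, the explicit values of the M\"obius coefficients $n_\alpha$ never need to be computed, as they collapse to $e_\beta=\Eu_X(X_\beta)$ when the double sum is regrouped by $\beta$.
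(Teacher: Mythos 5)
Your argument is correct, but it is organized differently from the paper's. The paper expands the \emph{cycle} $T^{-1}(\mathbf 1_X)=X+\sum_\alpha a_\alpha\overline X_\alpha$, extracts the relation $1=\Eu_X(X_\beta)+\sum_{\alpha\succ\beta}a_\alpha\Eu_{\overline X_\alpha}(X_\beta)$ by evaluating at a point of each orbit, applies (\ref{1}) only to $X$ itself, and then runs an induction on $\dim X$ so that the theorem for the lower-dimensional toric varieties $\overline X_\alpha$ supplies $c^{\M}(\overline X_\alpha)=\sum_{\beta\prec\alpha}\Eu_{\overline X_\alpha}(X_\beta)[\overline X_\beta]$. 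You instead expand the \emph{constructible function} $\Eu_X=\sum_\alpha n_\alpha\mathbf 1_{\overline X_\alpha}$, use $T^{-1}(\Eu_X)=[X]$, and apply (\ref{1}) to every orbit closure $\overline X_\alpha=X_{A_\alpha}$ at once; the unitriangularity of $\mathbf 1_{\overline X_\alpha}=\sum_{\beta\le\alpha}\mathbf 1_{X_\beta}$ does the work that induction does in the paper, and the coefficients $n_\alpha$ cancel formally. In effect the two proofs invert the same triangular system in opposite directions: the paper trades the need for (\ref{1}) on all the $\overline X_\alpha$ for an inductive hypothesis plus the relations among the local Euler obstructions $\Eu_{\overline X_\alpha}(X_\beta)$ of the subvarieties, while your version avoids induction entirely and never mentions those interior Euler obstructions, at the cost of invoking the Barthel--Brasselet--Fieseler formula for each face. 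All the steps you use are justified by what the paper sets up: $\Eu_X$ is orbit-constant by torus equivariance, $T([X])=\Eu_X$ by definition of $T$, and $c^{\M}$ on cycles of $X$ is defined so that $c^{\MP}(\mathbf 1_{\overline X_\alpha})$ computed on $X$ agrees with the pushforward of $c^{\MP}(\overline X_\alpha)$ computed on $\overline X_\alpha$, which is what licenses using (\ref{1}) facewise. Your route is arguably the cleaner of the two; the paper's inductive version has the side benefit of producing, via equation (\ref{2}), explicit recursive relations among the local Euler obstructions of $X$ and of its orbit closures.
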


\begin{proof} 

Write $T^{-1}(\mathbf 1_X)=X+\sum_\alpha a_\alpha \overline X_\alpha$ for some $a_\alpha \in \mathbb Z$, so that $\mathbf 1_X=T(X)+\sum_\alpha a_\alpha T(\overline X_\alpha)$, where the sums are over $\alpha$ such that $\overline X_\alpha \neq X$. Then for $x\in X_\beta$, we get
\begin{equation}\label{2}
1=\Eu_X(X_\beta)+\sum_{\alpha \succ \beta} a_\alpha \Eu_{\overline X_\alpha}(X_\beta),
\end{equation}
where the sum is over all orbits $X_\alpha$ such that $X\neq \overline X_\alpha\supset X_\beta$. We also have
 \[c^{\MP}(X)=c^{\M}\circ T^{-1}(\mathbf 1_X)=c^{\M}(X)+\sum_\alpha a_\alpha c^{\M}(\overline X_\alpha).\]
 Using (\ref{1}), this gives
 \begin{equation}\label{3}c^M(X)=[X]+\sum_\alpha [\overline X_\alpha]-\sum_\alpha a_\alpha c^{\M}(\overline X_\alpha),\end{equation}
 where again the sum is over all $\alpha$ such that $\overline X_\alpha\neq X$.

We shall use induction on the dimension of $X$. If $\dim X=1$, then there are two $0$-dimensional orbits $x_1$ and $x_2$. Thus (\ref{2}) gives $1=\Eu_X(x_i)+a_i$, for $i=1,2$, so that $a_i=1-\Eu_X(x_i)$. Hence (\ref{3}) gives 
\[c^M_1(X)=\sum [x_i]-\sum (1-\Eu_X(x_i))[x_i]=\sum \Eu_X(x_i)[x_i],\]
which is what we wanted to show.

Assume now that the theorem holds for toric varieties of dimension $< \dim X$. Then for each $\overline X_\alpha\neq X$ we can write 
\[c^M(\overline X_\alpha)=\sum_{\beta\prec \alpha} \Eu_{\overline X_\alpha}(X_\beta) [\overline X_\beta], \]
where the sum is over all $\beta$ such that $X_\beta\subset \overline X_\alpha$. From (\ref{3}) we get
\[c^M(X)=[X]+\sum_\alpha [\overline X_\alpha]-\sum_\alpha a_\alpha\sum_{\beta\prec \alpha} \Eu_{\overline X_\alpha}(X_\beta)[\overline X_\beta].\]
Rewriting the last double sum as $\sum_\beta (\sum_\alpha a_\alpha \Eu_{\overline X_\alpha}(X_\beta))[\overline X_\beta]$ and applying (\ref{2}) gives the formula of the theorem.
\end{proof}

Let $\mu_k:=\deg M_k$ denote the degrees of the polar varieties of $X$. Applying the  equality (\ref{CP1}) of Theorem \ref{CP} we obtain:

\begin{thm}
The degrees of the polar varieties of the toric variety $X$ are given by
\[\mu_k=\sum_{i=0}^k (-1)^i \binom{n-i+1}{n-k+1} \sum_\alpha \Eu_X(X_\alpha) \Vol_\mathbb Z(F_\alpha),\]
where the second sum is over all $\alpha$ such that $X_\alpha$ has codimension $i$ in $X$, and $ \Vol_\mathbb Z(F_\alpha)$ denotes the lattice volume of the face $F_\alpha$ of $P$ corresponding to the orbit $X_\alpha$.
\end{thm}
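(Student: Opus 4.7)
The plan is to start from formula (\ref{CP1}) of Theorem \ref{CP} and take degrees on both sides. The class $[M_k]$ lies in $A_{n-k}(X)$, and $\mu_k=\deg M_k$ is obtained by capping with $h^{n-k}$ after pushing forward to $\mathbb P^N$. Applied termwise to (\ref{CP1}), this yields
\[
\mu_k \;=\; \sum_{i=0}^k (-1)^i\binom{n-i+1}{n-k+1}\,\deg\bigl(h^{n-i}\cap c^{\M}_i(X)\bigr),
\]
so the whole problem reduces to identifying $\deg\bigl(h^{n-i}\cap c^{\M}_i(X)\bigr)$ with the inner sum $\sum_{\codim X_\alpha=i}\Eu_X(X_\alpha)\Vol_{\mathbb Z}(F_\alpha)$ appearing in the statement.

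Next I would substitute the formula of Theorem \ref{CM} and extract its codimension-$i$ part, namely
\[
c^{\M}_i(X) \;=\; \sum_{\codim X_\alpha = i} \Eu_X(X_\alpha)\,[\overline X_\alpha].
\]
Since $\overline X_\alpha$ has dimension $n-i$ in $\mathbb P^N$, one has $\deg\bigl(h^{n-i}\cap[\overline X_\alpha]\bigr)=\deg \overline X_\alpha$, so the inner degree collapses to a sum of ordinary degrees of orbit closures weighted by local Euler obstructions.

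The remaining ingredient is the classical identification of the degree of a projective toric variety with the normalized lattice volume of its defining polytope. Since $\overline X_\alpha$ is the toric variety associated to $A_\alpha=A\cap F_\alpha$ and its embedding in $\mathbb P^N$ factors through the linear span of the coordinates indexed by $A_\alpha$, one gets $\deg \overline X_\alpha=\Vol_{\mathbb Z}(F_\alpha)$. Plugging this in produces precisely the claimed formula.

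I do not expect a genuine obstacle here: the argument is pure bookkeeping once Theorems \ref{CP} and \ref{CM} are in hand, together with the degree-equals-lattice-volume identity. The only point that needs a little care is the indexing and the verification that the degree of an orbit closure in $\mathbb P^N$ coincides with its intrinsic toric degree (which is immediate because the embedding is linear on the relevant coordinate subspace).
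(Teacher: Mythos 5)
Your proposal is correct and follows exactly the route the paper intends: the paper's proof of this theorem is simply "apply (\ref{CP1}) of Theorem \ref{CP}" to the expression for $c^{\M}(X)$ from Theorem \ref{CM}, using $\deg \overline X_\alpha = \Vol_{\mathbb Z}(F_\alpha)$, which is precisely your bookkeeping. No discrepancy to report.
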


\begin{cor}
[Matsui--Takeuchi {\cite[Corollary 1.6]{MR2737807}}]
Assume  the dual variety of $X\subset \mathbb P^N$ is a hypersurface. Then its degree is given by
\[\deg X^\vee=\sum_{F_\alpha\preceq P} (-1)^{\codim F_\alpha}(\dim F_\alpha+1) \Eu_X(X_\alpha) \Vol_\mathbb Z(F_\alpha),\]
where $X_\alpha$ denotes the orbit in $X$ corresponding to the face $F_\alpha$ of $P$.
\end{cor}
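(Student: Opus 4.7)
The plan is to deduce the corollary directly from the preceding Theorem by specializing to $k=n$, once we identify the top polar degree $\mu_n$ with $\deg X^\vee$.

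The key classical fact I would invoke is that whenever $X^\vee\subset(\mathbb P^N)^\vee$ is a hypersurface, one has $\deg X^\vee=\mu_n$. This follows from the conormal description: the projective conormal variety $C(X)\subset\mathbb P^N\times(\mathbb P^N)^\vee$ has dimension $N-1$, its second projection has image $X^\vee$, and under the hypersurface assumption this projection is generically finite (in fact birational, by biduality). Intersecting $C(X)$ with $\mathbb P^N\times L$ for a general line $L\subset(\mathbb P^N)^\vee$ computes $\deg X^\vee$, and after dualizing $L$ to a codimension-$2$ subspace $L_n\subset\mathbb P^N$ one sees that this intersection number is precisely $\mu_n=\deg M_n$, matching the definition of $M_n$ with $L_k=L_n$.

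Granting this identification, the rest is bookkeeping. Setting $k=n$ in the preceding Theorem, the binomial coefficient simplifies to $\binom{n-i+1}{1}=n-i+1$, giving
\[
\mu_n=\sum_{i=0}^n(-1)^i(n-i+1)\sum_{\codim X_\alpha=i}\Eu_X(X_\alpha)\Vol_\mathbb Z(F_\alpha).
\]
Under the orbit--face correspondence, $\dim X_\alpha=\dim F_\alpha$, so $\codim X_\alpha=\codim F_\alpha$, and hence $(-1)^i=(-1)^{\codim F_\alpha}$ and $n-i+1=\dim F_\alpha+1$. The iterated sum collapses into a single sum indexed by faces $F_\alpha\preceq P$, yielding precisely the stated formula.

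The only substantive obstacle is the identification $\deg X^\vee=\mu_n$; everything else is specialization and reindexing. This identification is standard in enumerative geometry but genuinely depends on the hypersurface assumption: if $X^\vee$ had higher codimension, $\mu_n$ would vanish and one would need the lower $\mu_k$ together with more refined invariants (the higher polar classes of $X^\vee$) to recover $\deg X^\vee$.
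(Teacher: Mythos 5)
Your proposal is correct and follows the same route as the paper, which simply observes that $\deg X^\vee=\mu_n$ under the hypersurface hypothesis and specializes the preceding theorem to $k=n$. Your extra justification of the identification $\deg X^\vee=\mu_n$ via the conormal variety and biduality, and the explicit reindexing $n-i+1=\dim F_\alpha+1$, are sound elaborations of steps the paper leaves implicit.
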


\begin{proof}
In this case the degree of the dual variety is equal to $\mu_n$.
\end{proof}

\medskip

\textbf{Examples.} 
The toric varieties we consider need not be normal, in particular the set of lattice points $A$ need not be equal to the set of lattice points in $P=\Conv (A)$. Note that we can view $X_A$ as a (toric) linear projection of $X_P$. When this projection is ``generic'', the Chern--Mather classes of $X_A$ are just the pushdowns of the Chern--Mather classes of $X_P$ \cite[Corollaire, p. 20]{MR1074588}. Here are two simple examples.
\medskip

1)  Let $A=\{(0,0), (0,1), (1,1), (2,0)\}$. Then $X_A\subset \mathbb P^3$ is a cubic surface with a double line with two pinch points, and it is the projection of  a rational normal surface of type $(1,2)$. The closure of the orbit of $X_A$ corresponding to the line segment $[(0,0), (2,0)]$ has normalized lattice volume $1$ and local Euler obstruction 2. The three other 1-dimensional orbits are smooth and have lattice volume 1. Moreover, as shown in \cite[p. 29]{MR1074588}, the local Euler obstruction at a pinch point is 1. Hence we get
\[ \deg X_A^\vee = 3\cdot 3-2(2\cdot 1+3)+4=3.\]
Since any toric hypersurface $X_A$, where $A$ is not a pyramid, is selfdual \cite{MR2835342}, this is of course no surprise. Note that we also get $\deg X_P^\vee =3$.
\medskip

2) Let $A=\{(0,0), (1,1), (0,2), (3,0)\}$. Then $X_A\subset \mathbb P^3$ is a (non-generic) toric linear projection of the weighted projective space $X_P=\mathbb P(1,2,3)\subset \mathbb P^6$. In this case $\deg X_A^\vee =6$, whereas $\deg X_P^\vee=7$. Note that for $X_A$, the local Euler obstructions at the 1-dimensional orbits are 1, 2, and 3, whereas all the three 0-dimensional orbits have local Euler obstruction 0.  (For more examples of explicit computations of the local Euler obstruction for toric varieties, especially in the case of weighted projective spaces, see \cite{N}.)

\medskip

\textbf {Remark.}
There has recently been a renewed interest in Chern--Mather classes and polar varieties, in particular related to the concept of Euclidean distance degree. This includes other types of polar varieties  (see the survey \cite{MR3335572} and the references given there). For a cycle theoretic approach, see \cite{Aluffi};  for applications, see \cite{MR3451425} for the general case and \cite{HS} for the toric case. 

 \bigskip

\bigskip

\noindent {\bf References}
\medskip

\begin{biblist}

\bib{Aluffi}{article}{
author={Aluffi, Paolo},
title={Projective duality and a Chern--Mather involution},
journal={arXiv:1601.05427},
date={2016},
}

\bib{MR1197235}{article}{
   author={Barthel, Gottfried},
   author={Brasselet, Jean-Paul},
   author={Fieseler, Karl-Heinz},
   title={Classes de Chern des vari\'et\'es toriques singuli\`eres},
   language={French, with English and French summaries},
   journal={C. R. Acad. Sci. Paris S\'er. I Math.},
   volume={315},
   date={1992},
   number={2},
   pages={187--192},
   issn={0764-4442},
   review={\MR{1197235}},
}

\bib{MR2835342}{article}{
   author={Bourel, Mathias},
   author={Dickenstein, Alicia},
   author={Rittatore, Alvaro},
   title={Self-dual projective toric varieties},
   journal={J. Lond. Math. Soc. (2)},
   volume={84},
   date={2011},
   number={2},
   pages={514--540},
   issn={0024-6107},
   review={\MR{2835342}},
   doi={10.1112/jlms/jdr022},
}

\bib{MR3451425}{article}{
   author={Draisma, Jan},
   author={Horobe{\c{t}}, Emil},
   author={Ottaviani, Giorgio},
   author={Sturmfels, Bernd},
   author={Thomas, Rekha R.},
   title={The Euclidean Distance Degree of an Algebraic Variety},
   journal={Found. Comput. Math.},
   volume={16},
   date={2016},
   number={1},
   pages={99--149},
   issn={1615-3375},
   review={\MR{3451425}},
   doi={10.1007/s10208-014-9240-x},
}

\bib{MR1234037}{book}{
   author={Fulton, William},
   title={Introduction to toric varieties},
   series={Annals of Mathematics Studies},
   volume={131},
   note={The William H. Roever Lectures in Geometry},
   publisher={Princeton University Press, Princeton, NJ},
   date={1993},
   pages={xii+157},
   isbn={0-691-00049-2},
   review={\MR{1234037}},
}

\bib{HS}{article}{
author={Helmer, Martin},
author={Sturmfels, Bernd},
title={Nearest points on toric varieties},
journal={arXiv:1601.03661 },
date={2016},
}

\bib{MR0361141}{article}{
   author={MacPherson, R. D.},
   title={Chern classes for singular algebraic varieties},
   journal={Ann. of Math. (2)},
   volume={100},
   date={1974},
   pages={423--432},
   issn={0003-486X},
   review={\MR{0361141}},
}

\bib{MR2737807}{article}{
   author={Matsui, Yutaka},
   author={Takeuchi, Kiyoshi},
   title={A geometric degree formula for $A$-discriminants and Euler
   obstructions of toric varieties},
   journal={Adv. Math.},
   volume={226},
   date={2011},
   number={2},
   pages={2040--2064},
   issn={0001-8708},
   review={\MR{2737807 (2012e:14103)}},
   doi={10.1016/j.aim.2010.08.020},
}

\bib{N}{article}{
author={N\o dland, Bernt Ivar Utst\o l},
title={Singular toric varieties},
journal={Master's thesis, University of Oslo},
date={2015},
}

\bib{MR510551}{article}{
   author={Piene, Ragni},
   title={Polar classes of singular varieties},
   journal={Ann. Sci. \'Ecole Norm. Sup. (4)},
   volume={11},
   date={1978},
   number={2},
   pages={247--276},
   issn={0012-9593},
   review={\MR{510551}},
}

\bib{MR1074588}{article}{
   author={Piene, Ragni},
   title={Cycles polaires et classes de Chern pour les vari\'et\'es
   projectives singuli\`eres},
   language={French},
   conference={
      title={Introduction \`a la th\'eorie des singularit\'es, II},
   },
   book={
      series={Travaux en Cours},
      volume={37},
      publisher={Hermann, Paris},
   },
   date={1988},
   pages={7--34},
   review={\MR{1074588 (91j:32040)}},
}

\bib{MR3335572}{article}{
   author={Piene, Ragni},
   title={Polar varieties revisited},
   conference={
      title={Computer algebra and polynomials},
   },
   book={
      series={Lecture Notes in Comput. Sci.},
      volume={8942},
      publisher={Springer, Cham},
   },
   date={2015},
   pages={139--150, arXiv:1601.03661},
   review={\MR{3335572}},
}

\end{biblist}

\end{document}